\newcommand{\N}{\mathbb{N}}
\newcommand{\R}{\mathbb{R}}
\newcommand{\C}{\mathbb{C}}
\newtheorem{theorem}{Theorem}[section]
\newtheorem{lemma}[theorem]{Lemma}
\newtheorem{proposition}[theorem]{Proposition}
\theoremstyle{definition}
\newtheorem{example}[theorem]{Example}
\theoremstyle{remark}
\newtheorem{remark}[theorem]{Remark}
\numberwithin{equation}{section}
\newcommand{\esssup}{\operatorname*{ess\,sup}}
\begin{document}

\title{Stochastic Stability for Fiber Expanding Maps via a Perturbative Spectral Approach}

\author{Yushi Nakano}

\address{Graduate School of Human and Environmental Studies, 
Kyoto University 
Yoshida Nihonmatsu-cho, Sakyo-ku,
Kyoto, 606-8501,
Japan}
\email{
nakano.yushi.88m@st.kyoto-u.ac.jp}
\thanks{This work was supported by JSPS KAKENHI Grant Number 11J01842.
}

\subjclass[2010]{Primary 37C30; Secondary 37H10}

\keywords{Stochastic stability; Transfer operator}

\begin{abstract}
We consider small perturbations of expanding maps induced by skew-product mappings whose base dynamics are not invertible necessarily. 
Adopting a previously developed perturbative spectral approach, 
we show stability of the densities of the unique absolutely continuous invariant probability measures for expanding maps under these perturbations,  and upper bounds on the rate of exponential decay of fiber correlations associated to the measures as the noise level goes to zero.
\end{abstract}

\maketitle

\markboth{Y. Nakano}{STOCHASTIC STABILITY VIA A PERTURBATIVE SPECTRAL APPROACH}

\section{Introduction}
As is well known, several statistical properties of dynamical systems (such as the existence of SRB measures, the exponential decay of correlations, and the central limit theorem)~ can be obtained by demonstrating the spectral gap of the transfer operator of the dynamical system in a suitable Banach space.
In addition, these statistical properties and quantities are expected to be stable if "the spectrum of the transfer operator" is also stable. 
(The precise definition and properties of the transfer operator are provided in Section \ref{section:proof}.)
This perturbative spectral approach was developed by Baladi and Young and their contemporaries, who sought a simple proof that a (piecewise) expanding map is stochastically stable (i.e.,  the densities of the unique absolutely continuous invariant probability measures for the dynamics are stable) under independent and identically distributed  perturbations, and that its related statistical quantities, such as the rate of the exponential decay of correlations, are also stable (see  \cite{Baladi} and references therein). 
This approach was 
extended by Baladi \cite{Baladi97} and independently by Bogensch\"utz \cite{B1}, to the case of perturbations induced by skew-product mappings. However, these extensions are restricted to mixing or invertible base dynamics. 
In this paper, an alternative perturbative spectral approach based on the Baladi-Young perturbation lemmas is presented, in which the  base dynamics need not be mixing or invertible. 
Consequently, stochastic stability and upper bounds of the exponential decay of correlations for expanding maps under perturbations induced by skew-product mappings whose base dynamics are not invertible necessarily are demonstrated. 
Our result extends the result established by Baladi, Kondah, and Schmitt in \cite{BKS}. 

\subsection{Definitions and results}\label{results}
Let $C^r(M,M)$ be the space of all $C^r$ endomorphisms on a compact smooth Riemannian manifold $M$, endowed with the usual $C^r$ metric $d_{C ^r}(\cdot ,\cdot)$ with $r> 1$. 
(Given that $r=k+\gamma$ for some $k\in \N$, $k\geq 1$ and $0\leq \gamma \leq 1$, $f\in C^r(M,M)$ denotes the $k$-th derivative of $f$ is $\gamma$-H\"older.)
$f$ in $C^r(M,M)$ is said to be an expanding map when there exist constants $C>0$ and $\lambda >1$ such that
\begin{equation*}
\Vert Df^n(x)v\Vert \geq C\lambda ^n \Vert v\Vert  ,\quad n\geq 1
\end{equation*}
for each $x\in M$ and $v\in T_xM$. For the properties of expanding maps, the reader is referred to \cite{KH}. 
The expanding constant $\Lambda _r(f)$ of an expanding map $f:M\to M$ is defined by
\begin{equation*}
\Lambda _r(f)=\limsup _{m\to \infty}\left(\sup _{x\in M} \sum _{f^m(y)=x} \frac{\Vert D(f_{y}^{-m})(x)\Vert ^r}{\vert \det Df^m(y)\vert } \right)^{1/m},
\end{equation*}
which is strictly smaller than $1$ (see (2.16) in \cite{BKS}).
Here, $f^{-m}_{y}$ is the corresponding local inverse branch in a neighborhood of $x$ for each $y\in f^{-m}(\{x\})$. 

Let $\varOmega$ be a separable metric space endowed with the Borel $\sigma$-field $\mathcal B(\varOmega)$ with complete probability measure $P$.
Given an expanding map $f_0:M\to M$ of class $C^r$,
let $\{ f_{\epsilon} \} _{\epsilon >0}$ be a family of continuous mappings defined on $\varOmega$ with values in $C^r(M,M)$ such that
\begin{equation}\label{eq:assumption}
\esssup\displaylimits_{\omega\in \varOmega} d_{C^r}(f_{\epsilon}(\omega) ,f_0) \rightarrow 0 \quad \mathrm{as} \;\epsilon \rightarrow 0.
\end{equation}
For each $\epsilon >0$, 
adopting the notation $f_\epsilon (\omega ,\cdot)=f_\epsilon (\omega)$, 
the distance between $f_\epsilon(\omega ,x)$ and $f_\epsilon(\omega ^\prime ,x)$ is bounded by $d_{C^r}(f_\epsilon (\omega),f_\epsilon (\omega ^\prime))$ for each $x\in M$ and each $\omega , \omega ^\prime \in \varOmega$. Thus,
it is straightforward to realize that $f_\epsilon :\varOmega \times M\to M$ is a continuous (in particular, measurable) mapping.
 Note also that if $\epsilon >0$ is sufficiently small, $f_\epsilon (\omega)$ is $P$-almost surely an expanding map of class $C^r$. 

Let
$\theta :\varOmega \rightarrow \varOmega$ be a measure-preserving measurable transformation on $(\varOmega ,P)$. 
For each $\epsilon >0$ and $n\geq 1$, let $f_{\epsilon}^{(n)}(\omega,x)$ be the fiber component in the $n$-th iteration of the skew product mapping
\begin{equation*}
\varTheta _{\epsilon} (\omega , x) =(\theta \omega ,f_{\epsilon}(\omega ,x)),\quad (\omega ,x) \in \varOmega\times M,
\end{equation*}
where we simply write $\theta \omega$ for $\theta (\omega)$. 
Setting the notation $f_{\epsilon}^{(n)}(\omega)=f_{\epsilon}^{(n)}(\omega,\cdot)$,the explicit form of $f_{\epsilon}^{(n)}(\omega)$ is
\begin{equation}
\label{eq:fibre}
f_{\epsilon}^{(n)}(\omega )=f_{\epsilon}(\theta^{n-1}\omega)\circ f_{\epsilon}(\theta^{n-2}\omega)\circ\cdots\circ f_{\epsilon}(\omega).
\end{equation}

In \cite{BKS} and other articles on fiber dynamics, $\theta$ is required to be a bimeasurable transformation, i.e., an invertible measurable transformation whose inverse mapping is also measurable (see, for example, \cite{Buzzi1, B1, GQ}; a significant exception is described in Baladi \cite{Baladi97}). However, some framework accommodates important examples that are not generally invertible, as shown in Example \ref{examples}.  
Let $L^p_{\nu}(S)$ be the usual $L^p$ space on a measurable space $(S, \Sigma ,\nu)$ endowed with the $L^p$ norm  
$\Vert \cdot \Vert _{L^p}$ where $1\leq p\leq \infty$.
For each $u\in L^\infty_P(\varOmega)$, a functional $\ell _\theta u:L^1_P(\varOmega) \to \C$ is defined as by
$
\ell _\theta u(\varphi) 
 =\int u(\omega) \cdot \varphi (\theta \omega) dP$ for each  $ \varphi \in L^1_P(\varOmega)$.
Since $P$ is an invariant measure, $\vert \ell _\theta u (\varphi)\vert \leq \Vert u\Vert _{L^\infty} \Vert  \varphi \circ \theta \Vert _{L^1}= \Vert u\Vert _{L^\infty} \Vert  \varphi  \Vert _{L^1}$, i.e., $\Vert \ell _\theta u\Vert _{(L^1_P(\varOmega))^\ast} \leq \Vert u\Vert _{L^{\infty}}$. Thus, by the Riesz representation theorem, $\ell _\theta u\in L^\infty_P(\varOmega)\cong (L^1_P(\varOmega))^\ast$ and $\ell _\theta :L^\infty _P(\varOmega) \to L^\infty_P(\varOmega)$ is a bounded operator on $L^\infty _P(\varOmega)$ such that 
\begin{equation}\label{eq:A1}
 \int \ell _\theta u(\omega) \cdot \varphi (\omega)dP
 =\int u(\omega) \cdot \varphi (\theta \omega) dP, \quad  \varphi \in L^1_P(\varOmega).
\end{equation}
 ($\ell _\theta$ is called the transfer operator of $\theta $ with respect to $P$.)

Let $C^{r-1}(M)$ be the space of all complex-valued functions on $M$ of class $C^{r-1}$ endowed with the usual $C^{r-1}$ norm $\Vert \cdot \Vert _{C^{r-1}}$, and let $m$ be the normalized Lebesgue measure on $M$. 
Let $L^{\infty}_P (\varOmega ,C^{r-1}(M))$ be the Lebesgue-Bochner space of mappings defined on $\varOmega$ taking values in the Banach space $C^{r-1}(M)$ endowed with the $L^\infty$ norm $\Vert u\Vert _{L^{\infty}}:=\esssup _{\omega \in \varOmega}\Vert u (\omega)\Vert _{C^{r-1}}$. Here the 
usual abuse of notation is adopted (where an $L^\infty$ mapping is identified by its equivalence class).
The definition and properties of this space are provided in 
\cite{DJ77}. Here it is merely stated that 
 if $u\in L^\infty_P(\varOmega ,C^{r-1}(M))$, then $u$ is Bochner measurable, i.e., $u=\lim _{n\to \infty}u_n$ $P$-almost surely, where $u_n:\varOmega \to C^{r-1}(M)$ is a simple function of each $n\geq1$. 
 Setting $u(\omega ,\cdot)=u(\omega)$, for each $x\in M$ the mapping $\omega \mapsto u(\omega ,x)$ is $P$-almost surely the limit of the sequence $\{u_n(\cdot ,x)\}_{n\geq 1}$ of simple functions, and is thus measurable because $P$ is a complete probability measure. 
 Furthermore, $\Vert u(\cdot ,x)\Vert _{L^\infty}\leq \Vert u\Vert _{L^\infty}$; that is, $u(\cdot ,x) \in L^\infty_P(\varOmega)$ for each $x\in M$. 
It is supposed that for $\ell _\theta$ (and therefore $\theta$), there exists a bounded operator $\tilde \ell _\theta$ on $L_P^\infty(\varOmega ,C^{r-1}(M))$ such that the following holds for each $u\in L^\infty_P(\varOmega ,C^{r-1}(M))$, each bounded linear functional $A:C^{r-1}(M)\to \C$, each bounded operator $\mathcal A:C^{r-1}(M)\to C^{r-1}(M)$, each $x\in M$ and $P$-almost every $\omega \in \varOmega$:
\begin{equation}\label{eq:A2}
\tilde \ell _\theta u(\omega ,x)=\ell _\theta [u(\cdot ,x)](\omega)
\end{equation}
\begin{equation}\label{eq:A4}
\ell _\theta  [  Au (\cdot)](\omega)= A\tilde \ell _\theta u(\omega),
\quad \tilde \ell _\theta  [ \mathcal Au (\cdot)](\omega)= \mathcal A\tilde \ell _\theta u(\omega),
\end{equation}
and
\begin{equation}\label{eq:A3}
\Vert \tilde \ell _\theta u \Vert _{L^\infty} \leq \Vert u\Vert _{L^\infty} . 
\end{equation}

Now, some definitions are provided on measure-preserving skew-product transformations. 
Let $\mathcal{B} (M)$ be the Borel $\sigma$-field of $M$. 
It is known that for each probability measure $\mu$ on $\varOmega \times M$ with marginal $P$ on $\varOmega$, there exists a function $\mu _{\cdot}(\cdot):\Omega \times \mathcal{B} (M) \to [0,1]$, $(\omega,B)\mapsto \mu_{\omega}(B)$ that satisfies the following three conditions: $\omega \mapsto \mu _{\omega}(B)$ is measurable for each $B\in \mathcal{B} (M)$; $\mu _{\omega}$ is $P$-almost surely a probability measure on $M$; $\int \varphi d\mu =\int \varphi d\mu _{\omega} dP$ for each $\varphi \in L^1_\mu(\varOmega \times M)$. This function, which is $P$-almost surely unique, is called the disintegration of $\mu$ \cite[Chapter 1]{Arnold}.
Let $f:\varOmega \times M \to M$ be a measurable mapping.  
A measure $\mu$ on $\Omega \times M$ is called invariant under $f$ when $\mu$ is invariant under the skew-product mapping $\varTheta (\omega,x) =(\theta \omega,  f(\omega, x))$ and the marginal measure 
of $\mu$ coincides with $P$. 
\footnote{
When $\theta$ is a bimeasurable transformation, it follows from Theorem 1.4.5 in \cite{Arnold} which  
the pushforward measure of $\mu_{\omega}$ by $f(\omega)$ coincides with $\mu _{\theta \omega}$  $P$-almost surely if and only if $\mu$ is invariant under $f$. Such measures $\mu _\omega$ where $\omega \in \varOmega$ are called stationary measures in \cite{BKS}.} 
Given an absolutely continuous invariant probability measure $\mu$ of a measurable mapping $f:\varOmega \times M\to M$, 
 the (operational) forward fiber correlation function $C_{\varphi ,u}(\omega ,n)$ of  $\varphi \in  L^1_{m}( M)$  and $u \in L^\infty_m(M)$
  at $\omega \in \varOmega$ is defined by 
\begin{equation*}
C_{\varphi ,u}(\omega ,n)=\int \varphi  \circ f^{(n)}(\omega)\cdot u dm -\int \varphi d\mu _{\theta ^n\omega}\int u dm,
\end{equation*}
and we call $\ell _\theta^n C_{\varphi ,u} (\omega ,n)$ the (operational) backward fiber correlation function of $\varphi $ and $u $
  at $\omega \in \varOmega$.
(Since $\mu _\omega$ is $P$-almost surely absolutely continuous, $ C _{\varphi ,u}(\cdot ,n)$ is in $L^\infty_P(\varOmega)$ and $\ell _\theta ^nC _{\varphi ,u}(\cdot ,n)$ is well defined.)
The backward fiber correlation functions of $(f,\mu)$ are said to decay exponentially fast in 
a Banach space $E\subset L^\infty_m(M)$ when there exist constants $C>0$ and $0<\tau <1$ (independent of $\omega$) such that 
for any $\varphi \in L^1_m(M)$ and $u \in E$, 
\begin{equation}\label{eq:randomexpdecay}
\left\vert \ell _\theta ^nC _{\varphi ,u} (\omega ,n) \right\vert \leq C\tau ^n\Vert \varphi \Vert _{L^1}\Vert u\Vert _{E}  \quad \text{$P$-a.s.,}
\end{equation} 
where $\Vert \cdot \Vert _E$ is the norm of $E$. 
Similarly, the (operational) integrated correlation functions of $(f,\mu)$ decay exponentially fast in 
a Banach space $E\subset L^\infty_{P\times m}(\varOmega \times M)$ when there exist constants $C>0$ and $0<\tau <1$ (independent of $\omega$) such that 
for any $\varphi \in  L^1_{P\times m}(\varOmega \times M)$  and $u \in E$,  the mapping $\varOmega \ni \omega \mapsto C_{\varphi (\theta ^n\omega),u(\omega)}(\omega ,n)$ is integrable for each $n\geq 1$. Setting $\varphi (\omega )=\varphi(\omega ,\cdot)$, 
\begin{equation}\label{eq:randomexpdecay2}
\left\vert \int C _{\varphi (\theta ^n\cdot) ,u(\cdot)} (\cdot ,n)dP \right\vert \leq C\tau ^n\Vert \varphi \Vert _{L^1}\Vert u\Vert _{E}.
\end{equation} 
The smallest number $\bar \tau$ such that \eqref{eq:randomexpdecay} (or\eqref{eq:randomexpdecay2}) holds for any $\tau >\bar \tau$ is called the rate of exponential decay of backward fiber correlation functions (resp. integrated correlation functions) in $E$.  
When $\theta$ is bimeasurable, 
since $\ell _\theta u=u \circ \theta ^{-1}$ (see Example \ref{examples}), 
then $ \ell ^n_\theta [C _{\varphi (\theta ^n\cdot ) ,u(\cdot)}(\cdot ,n)](\omega)=\ell _\theta ^n[C_{\varphi (\omega),u(\theta ^{-n}\omega)}(\cdot ,n)](\omega) $ $P$-almost surely.
Thus, the exponential decay of backward fiber correlations in $C^{r-1}(M)$ yields 
the exponential decay of forward fiber correlations in $C^{r-1}(M)$ (i.e., \eqref{eq:randomexpdecay} holds, where $\ell _\theta ^nC_{\varphi ,u}(\omega ,n)$ is replaced by $C_{\varphi ,u}(\omega ,n)$)
and also the exponential decay of integrated correlations in $L^\infty(\varOmega,C^{r-1}(M))$. Under these conditions, the mixing of the skew-product mapping is equivalent to the mixing of the base dynamics (see comments in \cite[Subsection 0.2]{Buzzi1}). 
As is well known, any expanding map $f:M\to M$ admits a unique absolutely continuous ergodic invariant probability measure (abbreviated to aceip) on $M$ with a density function of class $C^{r-1}$. In addition, the correlations decay exponentially fast in $C^{r-1}(M)$ (see e.g. \cite{Ruelle}). The aceip of the expanding map $f_0:M\to M$ is denoted by $\mu ^0$. Let $\rho :M\to \C$ be the density function of $\mu ^0$. The rate of exponential decay of correlations of $(f_0,\mu ^0)$ is denoted by $\tau _0$.

Finally, a Banach space $K_P(\varOmega,C^{r-1}(M))$ of random observables as the Kolmogorov quotient (by equality $P$-almost everywhere) of the space is introduced 
\begin{equation}\label{eq:K}
\mathcal K_P(\varOmega,C^{r-1}(M))=\left\{u \in \mathcal L_P^\infty (\varOmega,C^{r-1}(M)) : \omega \mapsto \int u(\omega) dm\; \mathrm{is} \;  \mathrm{constant}\; \text{$P$-} \mathrm{a.s.} \right\}
\end{equation}
endowed with the $L^\infty$ norm. $\mathcal L_P^\infty (\varOmega,C^{r-1}(M))$ is the space of all Bochner measurable mappings $u:\varOmega \to C^{r-1}(M)$ with finite $L^\infty$ norm. 
(In Proposition \ref{BYineq}, it shall be proved that $K_P(\varOmega ,C^{r-1}(M))$ is a Banach space and that $\int u(\cdot)dm$ is measurable.)
As before, a mapping in $\mathcal K_P(\varOmega ,C^{r-1}(M))$ by its equivalence class in $K_P(\varOmega ,C^{r-1}(M))$ is identified.

The following theorem extends Theorems A, B and C in \cite{BKS} to perturbations induced by skew-product mappings whose base dynamics satisfy \eqref{eq:A2}, \eqref{eq:A4} and  \eqref{eq:A3}.
\begin{theorem}
\label{thm:main}
Let $f_0:M\to M$ be an expanding map, 
and $\{ f_{\epsilon} \}_{\epsilon >0}$ be a family of continuous mappings on $(\varOmega ,P)$ with values in $C^r(M,M)$ satisfying \eqref{eq:assumption}. Suppose that $\theta :\varOmega \to \varOmega$ is a measure-preserving transformation satisfying \eqref{eq:A2}, \eqref{eq:A4} and  \eqref{eq:A3}. 
Then, for any sufficiently small $\epsilon >0$, 
there exists a unique absolutely continuous invariant probability measure $\mu ^{\epsilon} $ on $\varOmega \times M$ whose density function $\rho _\epsilon=\frac{d\mu^\epsilon}{d(P\times m)}$ is  in $K_P(\varOmega ,C^{r-1}(M))$ in the notation $\rho _{\epsilon}(\omega)=\rho _\epsilon(\omega ,\cdot) $,  
and we have
\[
\esssup\displaylimits_{\omega\in\varOmega} \left\Vert \rho _\epsilon (\omega)- \rho _0\right\Vert _{ C^{r-1}} \rightarrow 0 \quad \text{as $\epsilon \rightarrow 0$}.
\]

Moreover, for each sufficiently small $\epsilon >0$, the backward fiber correlation functions and the  integrated correlation functions of $(f_{\epsilon}, \mu ^\epsilon)$ decay exponentially fast with rate $0<\tau _\epsilon <1$ in $C^{r-1}(M)$ and in $K_P(\varOmega ,C^{r-1}(M))$, respectively, and we have 
\begin{equation*}
\lim _{\epsilon \rightarrow 0}\tau _{\epsilon}  \leq\max \{ \tau _0 ,\Lambda _r(f_0)\}.
\end{equation*}
\end{theorem}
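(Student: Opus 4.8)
The plan is to recast the whole statement as a spectral-gap result for a single transfer operator acting on $K_P(\varOmega, C^{r-1}(M))$, and then to feed the necessary estimates into the Baladi--Young perturbation lemmas from \cite{Baladi}. For each $\epsilon\geq 0$ I would introduce the fiberwise Ruelle transfer operator $L_{f_\epsilon(\omega)}$ of the expanding map $f_\epsilon(\omega)$ on $C^{r-1}(M)$ and assemble the global operator
\[
\mathcal{L}_\epsilon u(\omega)=\tilde\ell_\theta\bigl[\,L_{f_\epsilon(\cdot)}u(\cdot)\,\bigr](\omega),\qquad u\in K_P(\varOmega,C^{r-1}(M)),
\]
where $L_{f_\epsilon(\cdot)}u(\cdot)$ denotes the section $\omega\mapsto L_{f_\epsilon(\omega)}u(\omega)$. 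A direct duality computation against test functions on $\varOmega\times M$, using the defining relation \eqref{eq:A1} of $\ell_\theta$ and the compatibility \eqref{eq:A2}, identifies $\mathcal{L}_\epsilon$ with the transfer operator of $\varTheta_\epsilon$ relative to $P\times m$; hence a nonnegative fixed point $\rho_\epsilon$ of $\mathcal{L}_\epsilon$ normalized by $\int\rho_\epsilon(\omega)\,dm=1$ yields exactly the invariant density sought, and the theorem reduces to locating the spectrum of $\mathcal{L}_\epsilon$ near $1$. That $K_P(\varOmega,C^{r-1}(M))$ is a suitable Banach space on which $\mathcal{L}_\epsilon$ acts is provided by Proposition \ref{BYineq}.

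The first structural step is to exhibit the spectral gap of the unperturbed operator. Applying \eqref{eq:A4} with $\mathcal{A}=L_0$ gives the factorization $\mathcal{L}_0=L_0\tilde\ell_\theta=\tilde\ell_\theta L_0$, a commuting product of the deterministic operator $L_0$ acting in the fiber variable and the base contraction $\tilde\ell_\theta$. Since $L_0$ has a spectral gap on $C^{r-1}(M)$ with simple leading eigenvalue $1$, eigenfunction $\rho_0$, spectral projection $v\mapsto(\int v\,dm)\rho_0$, and remaining spectral radius $\tau_0$, while $\Lambda_r(f_0)$ bounds its essential spectral radius, I would decompose $\mathcal{L}_0^n u=L_0^n\tilde\ell_\theta^n u$ fiberwise. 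The constant section $\omega\mapsto\rho_0$ lies in $K_P(\varOmega,C^{r-1}(M))$ and is fixed because $\tilde\ell_\theta\rho_0=\rho_0$ (from \eqref{eq:A2} and $\ell_\theta 1=1$); moreover \eqref{eq:A4} applied to the bounded functional $v\mapsto\int v\,dm$ shows that the fiber integral is preserved along $\tilde\ell_\theta^n$, so that the component of $\mathcal{L}_0^n u$ along $\rho_0$ stays constant in $\omega$, and the transverse part decays at rate $\tau_0$ with the help of \eqref{eq:A3}. This gives the spectral gap of $\mathcal{L}_0$ with peripheral part $\{1\}$ and the rest controlled by $\max\{\tau_0,\Lambda_r(f_0)\}$.

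Next I would verify the two hypotheses of the Baladi--Young lemmas for the family $\{\mathcal{L}_\epsilon\}$, with strong space $L^\infty_P(\varOmega,C^{r-1}(M))$ and weak space $L^\infty_P(\varOmega,C^{r-2}(M))$. The uniform Lasota--Yorke inequality
\[
\Vert\mathcal{L}_\epsilon^n u\Vert_{L^\infty(C^{r-1})}\leq C\alpha^n\Vert u\Vert_{L^\infty(C^{r-1})}+C_n\Vert u\Vert_{L^\infty(C^{r-2})},
\]
with $\alpha$ as close to $\Lambda_r(f_0)$ as desired, follows from the fiberwise inequality for $C^{r-1}$ expanding maps established in \cite{BKS}, together with the facts that $\tilde\ell_\theta$ commutes with the relevant differential operators by \eqref{eq:A4} and does not increase the $L^\infty$ norm by \eqref{eq:A3}. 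The convergence $\Vert\mathcal{L}_\epsilon-\mathcal{L}_0\Vert_{L^\infty(C^{r-1})\to L^\infty(C^{r-2})}\to 0$ is the analytic heart of the argument: after using \eqref{eq:A2} fiberwise and $\Vert\tilde\ell_\theta\Vert\leq 1$, it reduces to the uniform-in-$\omega$ closeness of $L_{f_\epsilon(\omega)}$ to $L_0$ in the $C^{r-1}\to C^{r-2}$ operator norm, which in turn follows from the hypothesis \eqref{eq:assumption} that $f_\epsilon(\omega)\to f_0$ in $C^r$ uniformly in $\omega$.

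With both hypotheses in place, the Baladi--Young lemmas produce, for all small $\epsilon$, a simple eigenvalue of $\mathcal{L}_\epsilon$ near $1$ whose spectral projection is close in the strong norm to that of $\mathcal{L}_0$; the preservation of the fiber integral pins this eigenvalue to exactly $1$, confines its eigenfunction $\rho_\epsilon$ to $K_P(\varOmega,C^{r-1}(M))$, and forces $\esssup_{\omega}\Vert\rho_\epsilon(\omega)-\rho_0\Vert_{C^{r-1}}\to0$, giving existence, uniqueness and stability simultaneously. The persistence of the gap then yields a decomposition $\mathcal{L}_\epsilon^n=\Pi_\epsilon+R_\epsilon^n$ with $\Pi_\epsilon$ the rank-one projection onto $\rho_\epsilon$ and $\Vert R_\epsilon^n\Vert\leq C\tau_\epsilon^n$, where $\limsup_{\epsilon\to0}\tau_\epsilon\leq\max\{\tau_0,\Lambda_r(f_0)\}$, and I would finish by rewriting each backward fiber and integrated correlation function as a pairing of $R_\epsilon^n$ (composed with $\ell_\theta^n$ as dictated by the definitions) against suitable test functions to obtain \eqref{eq:randomexpdecay} and \eqref{eq:randomexpdecay2}. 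I expect the mixed-norm convergence step to be the principal obstacle: because $\theta$ need not be invertible, no conjugation by $\theta^{-1}$ is available, and every estimate must be routed through $\tilde\ell_\theta$ and the compatibility relations \eqref{eq:A2}--\eqref{eq:A4}, so the Bochner-measurability and $L^\infty$-in-$\omega$ bookkeeping has to be controlled globally rather than fiber by fiber.
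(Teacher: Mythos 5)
Your proposal follows essentially the same route as the paper's own proof: the same operator $\mathcal L_\epsilon=\tilde\ell_\theta\tilde{\mathcal L}_\epsilon$ identified by duality with the transfer operator of $\varTheta_\epsilon$, the same commuting factorization $\mathcal L_0=\tilde\ell_\theta L_0=L_0\tilde\ell_\theta$ yielding the spectral gap of Proposition \ref{Prop6}, the same verification of the Baladi--Young hypotheses (uniform Lasota--Yorke plus $C^{r-1}\to C^{r-2}$ closeness via \cite[Lemma A.1]{BKS}), the same pinning of the perturbed eigenvalue at $1$ through preservation of the fiber integral, and the same spectral rewriting of the backward and integrated correlations. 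The only point to fix is notational: the perturbation lemmas must be applied on $K_P(\varOmega,C^{r-1}(M))$ (with the $C^{r-2}$-based norm as the weak norm), not on all of $L^\infty_P(\varOmega,C^{r-1}(M))$, since without ergodicity of $\theta$ the eigenvalue $1$ of $\mathcal L_0$ is simple only on $K_P(\varOmega,C^{r-1}(M))$ --- which is indeed the space you use everywhere else in your argument.
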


\begin{remark}
Bogensch\"utz \cite{B1} and Baladi \cite{Baladi97} also investigated stability problems of expanding maps using perturbative spectral approaches.
 Apart from the invertibility of the base dynamics, Theorem \ref{thm:main} differs from Bogensch\"utz's result 
in which he postulated a perturbation lemma for linear cocycles. Therefore, in his result, the "coefficient" $C$ in \eqref{eq:randomexpdecay} may depend on $\omega$, and 
 the integrated correlations 
 may not decay exponentially fast, as demonstrated by Buzzi in \cite[Appendix A]{Buzzi1}. 
Within the setting of mixing base dynamics, Baladi obtained a sharper spectral stability, which yields a more satisfactory result for the decay rate stability; compare \cite[Theorem 5 and Proposition 3.1]{Baladi97} and her Banach space $\mathcal B(\alpha)$ with Theorem \ref{thm:main}, Proposition \ref{Prop6} and $K_P(\varOmega ,C^{r-1}(M))$. 
However, the quasi-compactness of the transfer operator of the skew-product mapping in the Banach space $\mathcal B(\alpha)$ implies the mixing in the skew-product mapping  (in particular, the mixing in the base dynamics). Thus, Baladi's Banach space $\mathcal B(\alpha)$ is not applicable to setting used in this study, in which the base dynamics are not necessarily mixing. 
\end{remark}

\begin{remark}
It follows from Theorem \ref{thm:main} that 
if $(\theta ,P)$ is ergodic, then $(\varTheta _\epsilon,\mu^\epsilon)$ is ergodic for any sufficiently small $\epsilon >0$.
Indeed, let $A \in \mathcal B(\varOmega)\times \mathcal B(M)$ be invariant under $\varTheta _\epsilon$, and suppose that $0<\mu ^\epsilon (A)<1$. 
Then, it follows from Theorem \ref{thm:main} and the invariance of $A$ that for each $B\in \mathcal B(\varOmega)\times \mathcal B(M)$, if the length of $B^\omega =\{ x\in M :(x,\omega)\in B\}$ is $P$-almost surely constant (where the constant is denoted as $\ell(B)$), then
\begin{equation}\label{eq:premixing}
 (P\times m)(A\cap B)=\mu ^\epsilon (A)\cdot (P\times m)(B).
\end{equation}
Let $\Gamma _1=\{ \omega \in \varOmega: m(A^\omega)=0\}$.
Then, noting that $A^\omega =(f(\omega))^{-1}A^{\theta \omega}$ by the invariance of $A$ and that $f(\omega)$ is non-singular with respect to $m$ for each $\omega \in \varOmega$, $\theta ^{-1}\Gamma _1= \Gamma _1$.
Since $(\theta ,P)$ is ergodic and $P(\Gamma _1)\neq 1$ (otherwise, $\mu^\epsilon (A)=0$ by the absolute continuity of  $\mu^\epsilon $), $P(\Gamma _1)=0$.
On the other hand, $\Gamma _2=\{ \omega \in \varOmega :m(A^\omega) =1\} $ is not a full measure set since $\mu ^\epsilon (A)<1$. 
Thus, the set $\Gamma _3=\{ \omega \in \varOmega :0<m(A^\omega) <1\}$ is a positive measure set, and 
we can find a positive measure set $\Gamma \subset  \Gamma _3$ and $B_1,B_2 \in \mathcal B(\varOmega) \times \mathcal B(M)$ such that  $m(B_1^\omega)$ and $m(B_2^\omega)$ are $P$-almost surely constant, $\ell(B_1)=\ell(B_2)\neq 0$, $B^\omega _1\cap A^\omega =\emptyset$ and $B^\omega _2\subset A^\omega$ for each $\omega \in \Gamma$, and $B_1^\omega =B_2^\omega$ for each $\omega \in \varOmega \backslash \Gamma$.
Since these results contradict \eqref{eq:premixing}, $(\varTheta _\epsilon ,\mu^\epsilon)$ is ergodic.
\end{remark}

\begin{example}\label{examples}
We consider examples of measure-preserving transformations satisfying conditions \eqref{eq:A2}, \eqref{eq:A4}, and \eqref{eq:A3}.
The most trivial example is a bimeasurable transformation. When $\theta : \varOmega \to \varOmega$ is  bimeasurable,  $\ell _\theta u(\omega) =u(\theta ^{-1}\omega)$ for each $u\in L^\infty_P(\varOmega)$ and $P$-almost every $\omega \in \varOmega$ since $u(\omega)=u(\theta (\theta ^{-1}\omega))$. 
For each $u\in L^\infty_P(\varOmega,C^{r-1}(M))$,
let us define $\tilde \ell _\theta u:\varOmega\to C^{r-1}(M)$ by $\tilde \ell _\theta u=u \circ \theta ^{-1}$. 
Then, $\tilde \ell _\theta u$ is Bochner measurable since $\tilde \ell _\theta u$ is the composition of the Bochner measurable mapping $u:\varOmega\to C^{r-1}(M)$ and the measurable mapping $\theta ^{-1}:\varOmega\to\varOmega$. It is straightforward to verify that $\tilde \ell _\theta u\in L^\infty_P(\varOmega ,C^{r-1}(M))$ and that $\tilde \ell _\theta $ is a bounded operator on $L^\infty_P(\varOmega ,C^{r-1}(M))$ satisfying  \eqref{eq:A2}, \eqref{eq:A4} and \eqref{eq:A3}.

Now we consider a piecewise smooth mapping $\theta :\varOmega \to \varOmega$ of class $C^1$ on a compact region $\varOmega \subset \R ^d$, i.e., 
$\varOmega$ is the disjoint union of connected and open subsets $\Gamma _1,\ldots ,\Gamma _k$ up to a set of Lebesgue measures $0$ such that $\theta \vert _{\Gamma _j}$ agrees with a $C^1$ map $\theta _j$ defined on a neighborhood of $\overline \Gamma _j$ and $\theta_j$ is a diffeomorphism on the mapped image for each $1\leq j\leq k$. For a detailed study of these mappings, the reader is referred to \cite{GQ}. 
Let $V$ be the normalized Lebesgue measure on $\varOmega$ and define the transfer operator $\ell _{\theta ,V}:L^1_V(\varOmega) \to L^1_V(\varOmega)$ of $\theta $ with respect to $V$ as
\[
\ell _{\theta ,V} u =\sum _{j=1}^k \frac{1_{\Gamma _j} \cdot u}{\vert \det D\theta _j\vert } \circ \theta _j^{-1},\quad u\in L^1_V(\varOmega).
\]  
From the change of variables formula, it follows that $\int \ell _{\theta ,V}u\cdot \varphi dV=\int u\cdot \varphi \circ \theta dV$ for each $u,\varphi \in L^1_V(\varOmega)$ satisfying $u\cdot \varphi \circ \theta \in L^1_V(\varOmega)$ (in particular, $\varphi \in L^\infty_V(\varOmega)$). Thus, if $P$ is an absolutely continuous invariant measure of $\theta$, then the density function $p\in L^1_V(\varOmega)$ of $P$ is a fixed point of $\ell _{\theta ,V} $. It is assumed that $P$ is an absolutely continuous invariant probability measure whose density function $p$ is strictly positive $V$-almost everywhere.  Extensive examples of such measure-preserving transformations $(\theta ,P) are given in $\cite{Baladi}. Then, 
for each $u\in L^\infty_P(\varOmega)$ and $\varphi \in L^1_P(\varOmega)$, we have
\[
\int u \cdot \varphi \circ \theta dP=\int \ell _{\theta ,V} (u\cdot p)\cdot \varphi dV=\int \frac{\ell _{\theta ,V}  (u\cdot p)}{p}\cdot \varphi dP.
\]
Thus, for each $u\in L^\infty_P(\varOmega)$, $\ell _\theta u=\ell _{\theta ,V}  (u\cdot p)/p$ $P$-almost surely. For each $u\in L^\infty_P(\varOmega ,C^{r-1}(M))$, a mapping $\tilde \ell _\theta u:\varOmega \to C^{r-1}(M)$ is defined as $\tilde \ell _\theta u=
[\sum _{j=1}^k  (1_{\Gamma _j} \cdot u\cdot p \cdot \vert \det D\theta _j\vert ^{-1}) \circ \theta _j^{-1}]/p.
$
Since every subspaces of a separable metric space $C^{r-1}(M)$ is itself a separable space (see e.g.~ \cite[Theorem 16.2.b and 16.11]{Willard}), the (weakly) measurable mappings $1_{\Gamma _j}$, $p$,  $\vert \det D\theta \vert ^{-1}$  ($1\leq j\leq k$), and therefore $\tilde \ell _\theta u$, are Bochner measurable by the Pettis measurability theorem. 
Note that 
$
\Vert \tilde \ell _\theta u(\omega)\Vert _{C^{r-1}} \leq \Vert u \Vert _{L^\infty}\left \vert
\ell _\theta 1_{\varOmega} (\omega)\right\vert 
$
$P$-almost surely, since all of $1_{\Gamma _j}$, $p$,  $\vert \det D\theta \vert ^{-1}$ ($1\leq j\leq k$) are independent of $x$.
It follows from this and the fact $\ell _\theta 1_{\varOmega} =1_\varOmega$ (note that $\ell_{\theta ,V} p=p$) that $\tilde \ell _\theta $ is a bounded operator on $L^\infty_P(\varOmega ,C^{r-1}(M))$ satisfying \eqref{eq:A3}. It is straightforward to check by construction that $\tilde \ell _\theta $ satisfies   \eqref{eq:A2} and \eqref{eq:A4}.

Finally, the one-sided shift $\theta : \varOmega\to \varOmega$  is considered: $(\varOmega ,P)=(\tilde \varOmega ^\N, \tilde P^\N)$ is the product space of a probability separable metric space $(\tilde \varOmega,\tilde P)$, in which $(\theta \omega )_j=\omega _{j+1}$ for each $j\in \N=\{0,1,\ldots \}$ and each $\omega =(\omega _0\omega _1\ldots)\in \varOmega $. We note that for each $u\in L^\infty_P(\varOmega)$ and $\varphi \in L^1_P(\varOmega)$, 
\[
\int \left(\int u(\tilde \omega \omega) d\tilde P(\tilde \omega )\right)\cdot \varphi(\omega) dP=
\int  u(\tilde \omega \omega_0\omega _1\ldots )\cdot \varphi(\theta (\tilde \omega \omega _0\omega _1\ldots) )d\tilde P(\tilde \omega)dP(\omega ).
\]
Thus, $\ell _\theta u(\omega)=\int u(\tilde \omega \omega) d\tilde P(\tilde \omega)$ for $P$-almost every $\omega \in \varOmega$. By Fubini's theorem (consider the equivalence between the weak measurability and the Bochner measurability of a mapping $u:\varOmega \to C^{r-1}(M)$), for any $u\in L^\infty_P(\varOmega ,C^{r-1}(M))$,  there exists a Bochner measurable mapping $\tilde \ell _\theta u:\varOmega \to C^{r-1}(M)$ given by
\[
\tilde \ell _\theta u(\omega)=\int u(\tilde \omega \omega) d\tilde P(\tilde \omega),\quad \omega \in \varOmega.
\]
Furthermore, \eqref{eq:A3} for this bounded operator $\tilde \ell _\theta$ on $L^\infty_P(\varOmega ,C^{r-1}(M))$ follows from the Bochner integrability of $\tilde \varOmega \ni \tilde \omega \mapsto u(\tilde \omega \omega)$ for $P$-almost every $\omega \in \varOmega$ (by Fubini's theorem) and the triangle inequality. \eqref{eq:A2} and \eqref{eq:A4} are immediately obtained by construction.

\end{example}

\section{The proof}\label{section:proof}

The proof is started by analyzing the spectrum of "the graph transformation" induced by the transfer operators of the fiber dynamics $f_\epsilon (\omega)$, which is exactly the transfer operator of the skew-product mapping $\varTheta _\epsilon$ with respect to $P\times m$. 
Given a $C^r$ expanding mapping $f:M\to M$, the transfer operator $L(f):C^{r-1}(M) \to C^{r-1}(M)$ is defined as
\[
L(f)u (x)=\sum _{f(y)=x} \frac{u (y)}{\vert \det Df (y)\vert },\quad  x\in M 
\]
for each $u \in C^{r-1}(M).$
As is well known, for each $u \in C^{r-1}(M)$ and $\varphi \in L^1_m(M)$, the change of variables formula yields
\begin{equation}\label{eq:duality}
\int \varphi \cdot L(f)u dm=\int \varphi \circ f \cdot u dm.
\end{equation} 
It is remarked that $C^{r}(M,M)\ni f \mapsto L(f)$ is generally \emph{not} continuous in the norm topology. However, this quantity is continuous in the strong operator topology, as shown below. 
\begin{lemma}\label{lem:measurability}
There exists a $C^r$ neighborhood $\mathcal N(f_0)$ of $f_0$ such that for each $u \in C^{r-1}(M)$, the map $f\mapsto L(f) u$ is a continuous map from $\mathcal N(f_0)$ to 
$C^{r-1}(M)$. 
\end{lemma}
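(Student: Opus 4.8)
The plan is to localise $L(f)$ through the inverse branches of $f$ and to track how those branches, together with their Jacobian weights, depend on $f$. Since expansion is a $C^1$-open condition and $M$ is compact, I would first choose $\mathcal N(f_0)$ so small that every $f\in\mathcal N(f_0)$ is expanding with uniform constants and is a covering map of the (locally constant, hence fixed) degree $d=\deg f_0$. Uniform expansion then furnishes a radius $\delta>0$, independent of $f\in\mathcal N(f_0)$ and of $x\in M$, such that $f^{-1}(\overline{B(x,\delta)})$ splits into $d$ disjoint pieces, on each of which $f$ is a $C^r$ diffeomorphism onto $\overline{B(x,\delta)}$; I label the resulting inverse branches $g_{x,1}(f),\dots,g_{x,d}(f)$. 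Fixing a finite cover $\{B(x_j,\delta)\}_{j=1}^N$ with a subordinate partition of unity, on each ball one has
\[
L(f)u(x)=\sum_{i=1}^d u\bigl(g_{j,i}(f)(x)\bigr)\,\bigl|\det Dg_{j,i}(f)(x)\bigr|,\qquad x\in B(x_j,\delta),
\]
so it suffices to prove, for each $j,i$, that $f\mapsto u\circ g_{j,i}(f)$ and $f\mapsto|\det Dg_{j,i}(f)|$ are continuous into $C^{r-1}(\overline{B(x_j,\delta)})$.

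Next I would establish that $f\mapsto g_{j,i}(f)$ is continuous in $f$. The defining identity $f(g_{j,i}(f)(x))=x$, together with the uniform invertibility of $Df$ coming from expansion, places us in the setting of the implicit function theorem with parameter $f$; differentiating it gives $Dg=(Df\circ g)^{-1}$ and, by Fa\`a di Bruno, expresses every higher derivative $D^{\ell}g$ as a universal polynomial in the $D^{s}f\circ g$ ($s\le\ell$) and the lower derivatives of $g$. Granting continuity of the branches, continuity of the Jacobian weight is immediate since the determinant and matrix inversion are smooth. For the composition $u\circ g_{j,i}(f)$, the derivatives $D^{\ell}(u\circ g)$ with $\ell\le k-1$ are handled the same way: each ingredient is of the form $D^{s}u\circ g(f)$ ($s\le k-1$) or $D^{t}g(f)$, and since $D^{s}u$ is uniformly continuous and $g(f_n)\to g(f)$ uniformly, every ingredient converges in $C^0$; sums and products of $C^0$-convergent sequences converge in $C^0$, which yields convergence of $u\circ g(f_n)$ in the integer norm $C^{k-1}$.

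The hard part, and the step I would treat most carefully, is the top seminorm: the $\gamma$-H\"older continuity of $D^{k-1}(u\circ g)$. Inspecting the Fa\`a di Bruno expansion, every term except $(D^{k-1}u\circ g)\,(Dg)^{k-1}$ involves either a derivative of $u$ of order $\le k-2$ (which is Lipschitz) or a derivative of $g$, and for such Lipschitz factors the convergence in $C^{\gamma}$ follows from the interpolation estimate $[\phi]_{\gamma}\le 2^{1-\gamma}\,\|\phi\|_{C^0}^{1-\gamma}\,[\phi]_{\mathrm{Lip}}^{\gamma}$ applied to $\phi=\psi\circ g(f_n)-\psi\circ g(f)$, whose sup norm tends to $0$ while its Lipschitz seminorm stays bounded. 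The genuinely delicate term is $D^{k-1}u\circ g(f_n)\to D^{k-1}u\circ g(f)$ in $C^{\gamma}$, where $\psi=D^{k-1}u$ is only $\gamma$-H\"older; this is exactly the phenomenon that prevents $f\mapsto L(f)$ from being norm-continuous, since on the full H\"older space the composition (indeed translation) group is \emph{not} strongly continuous. I would resolve it by a density argument: the composition operators are uniformly bounded on $C^{r-1}$ over $f\in\mathcal N(f_0)$ because the branches have uniformly bounded $C^{r}$ norms, and for $u$ of class $C^{r}$ the function $\psi=D^{k-1}u$ is Lipschitz, so the interpolation estimate again gives the missing $\gamma$-H\"older convergence; a $3\varepsilon$-argument then transfers continuity to the $C^{r-1}$-closure of $C^{r}(M)$. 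The main obstacle is precisely this top-seminorm step, and locating where it holds (on the closure of the smooth functions, which is where the relevant densities sit) is the essential point that forces the Baladi--Young perturbative framework in place of naive norm perturbation.
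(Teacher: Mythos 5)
Your branch-by-branch decomposition with Fa\`a di Bruno bookkeeping is structurally the same computation as the paper's proof, which (citing BKS, Lemma A.1) writes $(L(f)u)^{(j)}$ as a sum of terms, paired between $f$ and $\tilde f$ by shadowing of preimages, built from derivatives of $u$, $\vert\det Df\vert^{-1}$ and $Df^{-1}\circ f$; and you are right that the whole difficulty sits in the $\gamma$-H\"older seminorm of the top-order terms. But your proof does not establish the lemma as stated, for two concrete reasons. First, the density step reaches too little: the closure of $C^r(M)$ in $C^{r-1}(M)$ is the little H\"older space, a \emph{proper} closed subspace (for $k=1$ it excludes $x\mapsto d(x,x_0)^\gamma$), so your $3\varepsilon$ argument yields continuity of $f\mapsto L(f)u$ only for $u$ in that subspace, whereas the lemma demands it for \emph{every} $u\in C^{r-1}(M)$. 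Second, the step you call ``immediate'' --- continuity of the Jacobian weights --- hides the identical obstruction: $\vert\det Dg_{j,i}(f)\vert=\vert\det Df\circ g_{j,i}(f)\vert^{-1}$, and its $(k-1)$-st derivative involves $(D^kf)\circ g_{j,i}(f)$, a merely $\gamma$-H\"older function composed with a moving branch; for the same reason the branches themselves converge in $C^k$ but not in $C^r$. So even for $u\in C^r(M)$ (indeed even for $u\equiv 1$) your argument is incomplete unless $D^kf_0$ happens to be little-H\"older.

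These gaps cannot be repaired, because the obstruction you isolated is genuine and the statement is false on the full H\"older space. Take $M=S^1=\R/\Z$, $f_0(x)=2x$, $f_t(x)=2x+t$ (so $d_{C^r}(f_t,f_0)=t\to0$), $r=1+\gamma$ with $0<\gamma<1$, and $u=\chi\cdot d(\cdot,0)^\gamma$ with $\chi$ a smooth bump equal to $1$ near $0$ and supported in a small interval. Then $L(f_t)u=(L(f_0)u)(\cdot-t)$, while $L(f_0)u(x)=2^{-1-\gamma}d(x,0)^\gamma$ near $0$; evaluating the H\"older quotient of $L(f_t)u-L(f_0)u$ at the points $x=t$, $y=0$ gives $\Vert L(f_t)u-L(f_0)u\Vert_{C^{r-1}}\geq 2^{-\gamma}$ for all small $t\neq0$. (The same translation trick with $u\equiv1$ and $f_0$ chosen so that $f_0'$ is not little-H\"older kills the Jacobian step.) Note that the paper's own proof is not a way out: it asserts that the $\gamma$-H\"older coefficient of $a_n(u,f;\cdot)-a_n(u,\tilde f;\cdot)$ is bounded by $\delta_r(f,\tilde f)\Vert u\Vert_{C^{r-1}}$ with $\delta_r\to0$, an estimate that would even imply \emph{norm} continuity of $f\mapsto L(f)$, contradicting the remark immediately preceding the lemma; your analysis locates exactly where that assertion breaks. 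What is true, and what the rest of the paper actually needs (Bochner measurability of $\omega\mapsto L(\epsilon;\omega)u(\omega)$ and hypothesis (A.2) of the Baladi--Young lemmas), is continuity of $f\mapsto L(f)u$ measured in a \emph{weaker} norm on the target (e.g.\ $C^{r'-1}$ with $r'<r$, or $C^0$), where your interpolation estimates close with no density restriction; alternatively one must work throughout in the little H\"older space. Your route therefore proves the correct weaker statement, not the lemma as printed --- and no argument could do better.
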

\begin{proof}
To prove this lemma, the argument in \cite[Lemma A.1]{BKS} is adopted. 
Let $\mathcal N(f_0)$ be a small $C^r$ neighborhood of $f_0$ so that any $f\in \mathcal N(f_0)$ is an expanding map.
We recall that all orbits of $f\in \mathcal N(f_0)$ are strongly shadowable: if $\tilde f$ is in a $\epsilon$-neighborhood of $f$ where $\epsilon >0$ is sufficiently small, then for a fixed $x\in  M$, there is a natural bijection between the sets $\{y \vert f(y)=x\}$ and $\{\tilde y \vert \tilde f (\tilde y)=x\}$ such that the distance between paired points is at most $\mathcal O(\epsilon)$. 
Given an integer $0\leq j\leq k$, a straightforward calculation shows that the $j$-th derivative of $L(f)u$ takes the following form:
\begin{equation}\label{eq:strsdw}
(L(f)u)^{(j)} (x)=\sum _{n=1}^{N(j,f)}a_n(u,f;y),
\end{equation}
where $N(j,f)\in \N$ and constant for all $f\in \mathcal N(f_0)$ (as can be seen by shrinking the neighborhood), and the terms of $a_n(u,f;\cdot):M\to \C$ for each $1\leq n\leq N(j,f)$ involve only the $m$-th derivative of $u$, $\vert \det Df \vert ^{-1}$ and $Df^{-1} \circ f$ with $m\leq j$, abusing the notation of the inverse branch of $f$ by $f^{-1}$. Hence, for each  $f\in \mathcal N(f_0)$, $1\leq j\leq k$ (particularly for $j=k$) and $1\leq n\leq N(j,f)$, the $j$-th derivative of $a_n(u,f;\cdot)$ is $\gamma$-H\"older, and the $\gamma$-H\"oder coefficient of the $j$-th derivative of $a_n(u,f;\cdot)-a_n(u,\tilde f;\cdot)$ is bounded by 
$\delta_r(f,\tilde f) \Vert u\Vert _{C^{r-1}}$, 
where $\delta _r(f,\tilde f)$ is a positive number that tends to zero as $f$ converges to $\tilde f$ in the $C^r$-topology. 
The conclusion immediately follows. 
\end{proof}

For simplicity, it is written as
 $L(\epsilon ;\omega)$ and $L_n(\epsilon ;\omega)$
 for $L(f_{\epsilon}(\omega))$ and $L(f_\epsilon^{(n)}(\omega))$, respectively, where $n\geq 1$, $\epsilon >0$ is sufficiently small and $\omega \in \varOmega$. 
For each $u \in L^\infty_P(\varOmega ,C^{r-1}(M))$, a function $\tilde{\mathcal L }_\epsilon u :\varOmega \to C^{r-1}(M)$ is defined as
\[
\tilde{\mathcal L}_\epsilon u(\omega) = L (\epsilon ;\omega)u (\omega), \quad \omega \in \varOmega.
\]
Note that $\tilde{ \mathcal L}_\epsilon u$ is the composition of a mapping $\alpha :\varOmega \times \varOmega \to C^{r-1}(M)$, $\alpha(\omega ,\omega ^{\prime})=L(\epsilon;\omega )u(\omega ^\prime)$ and a measurable mapping $\Lambda :\varOmega \to \varOmega \times \varOmega$, $\Lambda (\omega)=(\omega ,\omega)$. 
From Lemma \ref{lem:measurability} and the continuity of $f_\epsilon$, it follows that if $\epsilon$ is sufficiently small, then for each $\omega ^\prime \in \varOmega$, $\omega \mapsto \alpha(\omega ,\omega ^\prime)$ is a continuous mapping from $\varOmega$ to $C^{r-1}(M)$. 
Furthermore, for each $\omega \in \varOmega$, the mapping $\varOmega \ni \omega ^\prime \mapsto \alpha(\omega ,\omega ^\prime)$ is  measurable since $L(\epsilon;\omega)$ is continuous. Hence, by \cite[Lemma 3.14]{CV77}, $\alpha:\varOmega \times \varOmega \to C^{r-1}(M)$ and $\tilde {\mathcal L}_\epsilon u: \varOmega \to C^{r-1}(M)$ are both measureable. Moreover, reiterating the argument in Example \ref{examples} on Bochner measurability, it is deduced that $\tilde{\mathcal L}_\epsilon u: \varOmega \to C^{r-1}(M)$ is a Bochner measurable mapping.

Now, the weak Lasota-Yorke inequality for expanding maps (see e.g.~ \cite[Lemma 4.2]{BKS}) is adopted: that is, for each $C^r$ expanding map $f:M\to M$, there exists a constant $C_f>0$ such that 
\begin{equation}\label{eq:weakLY}
\Vert L(f)u\Vert _{C^{r-1}} \leq C_f \Vert u\Vert _{C^{r-1}}
\end{equation}
for each $u\in C^{r-1}(M)$. 
Hence, it follows from \eqref{eq:strsdw} and the estimate of $a_n(u,f;\cdot)-a_n(u,\tilde f;\cdot)$ in Lemma \ref{lem:measurability} that if $\epsilon >0$ is sufficiently small, then we have
\[
\Vert \tilde{ \mathcal L} _{\epsilon}u(\omega)\Vert _{C^{r-1}} \leq (C_{f_0}+\delta_r(f_\epsilon (\omega),f_0)) \Vert u(\omega)\Vert _{C^{r-1}}, \quad \text{$P$-a.s.,}
\]
for  each $u\in L^\infty_P(\varOmega ,C^{r-1}(M))$, where the notation $\delta _r(\cdot ,\cdot)$ adopted in the proof of Proposition \ref{lem:measurability} is used, i.e., $\tilde{\mathcal L}_\epsilon$ is a bounded operator on $L^\infty_P(\varOmega ,C^{r-1}(M))$. 
Recalling that $\tilde \ell _\theta $ is a bounded operator on $L^\infty_P(\varOmega ,C^{r-1}(M))$, we can define a bounded operator $\mathcal L_\epsilon :L^\infty_P(\varOmega ,C^{r-1}(M)) \to L^\infty_P(\varOmega ,C^{r-1}(M))$ by
\[
\mathcal L _\epsilon =\tilde \ell _\theta \tilde{\mathcal L}_\epsilon.
\]

$\mathcal L_\epsilon$ is "the transfer operator" of the skew-product mapping $\varTheta _\epsilon$ with respect to $P \times m$. Indeed, if $u\in L^\infty_P(\varOmega,C^{r-1}(M))$, then $\varOmega \times M\ni (\omega ,x)\mapsto u(\omega ,x)$ is measurable using the notation $u(\omega, \cdot)=u(\omega)$ by virtue of \cite[Lemma 3.14]{CV77} together with the fact that $\omega \mapsto u(\omega ,x)$ is measurable for each $x\in M$ (recall the argument in \eqref{eq:A2} above), and that $x\mapsto u(\omega ,x)$ is continuous for each $\omega \in \varOmega$.
 Hence, for each $u\in L^\infty_P(\varOmega ,C^{r-1}(M))$ and $\varphi \in L^1_{P \times m}(\varOmega \times M)$, applying \eqref{eq:A1}, \eqref{eq:A2} and \eqref{eq:duality} together with Fubini's theorem, 
 we have
\begin{align}\label{eq:globalduality}
&\int \varphi \cdot \mathcal L_\epsilon udmdP =\int \left(\int \varphi (\omega,x) \cdot  \ell _\theta[\tilde{\mathcal L} _\epsilon u(\cdot,x)](\omega) dP\right)dm\\ \notag
&= \int \left(\int \varphi (\theta \omega,x) \cdot  \tilde{\mathcal L}_\epsilon u(\omega,x)dP\right)dm
= \int \varphi  (\theta \omega,f_\epsilon (\omega ,x)) \cdot u(\omega,x)dmdP.
\end{align}
Moreover, for each $n\geq 1$ and $u\in L^\infty_P(\varOmega,C^{r-1}(M))$, applying $n$ iterations of \eqref{eq:globalduality} together with \eqref{eq:fibre}, \eqref{eq:A1}, \eqref{eq:A2}, \eqref{eq:duality}, and Fubini's theorem, we have
\begin{equation}\label{eq:F1}
\mathcal L_\epsilon ^nu (\omega)=\tilde \ell _\theta ^n[L_n(\epsilon ;\cdot) u(\cdot)](\omega) \quad \text{$P$-a.s.}
\end{equation}

The following proposition is not difficult to prove but is important.
\begin{proposition}\label{BYineq}
For any $\epsilon > 0$, $\mathcal{L} _{\epsilon}$ preserves a Banach space $K_P(\varOmega ,C^{r-1}(M))$ given in \eqref{eq:K}.
\end{proposition}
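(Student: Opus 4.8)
The plan is to establish, in order, the three assertions bundled into this proposition: that $\omega \mapsto \int u(\omega)\,dm$ is measurable for every $u \in \mathcal{L}^\infty_P(\varOmega, C^{r-1}(M))$, that $K_P(\varOmega, C^{r-1}(M))$ is a Banach space, and finally that $\mathcal{L}_\epsilon$ maps it into itself. The single object that drives all three is the bounded linear functional $A : C^{r-1}(M) \to \C$, $A(g) = \int g\,dm$; its boundedness follows from $\vert A(g)\vert \le \Vert g\Vert_{L^1} \le \Vert g\Vert_{C^{r-1}}$ since $m$ is a probability measure. The conceptual content of the whole statement is simply that the fiber transfer operators preserve total mass while $\ell_\theta$ fixes constants.

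For the measurability claim I would observe that $\omega \mapsto \int u(\omega)\,dm = A(u(\omega))$ is the composition of the Bochner measurable map $u$ with the bounded functional $A$, hence measurable because Bochner measurable maps are weakly measurable. For the Banach space claim the key estimate is $\vert \int u(\omega)\,dm - \int v(\omega)\,dm\vert \le \Vert u - v\Vert_{L^\infty}$ for $P$-almost every $\omega$; thus if $u_k \to u$ in the $L^\infty$ seminorm with each $\int u_k(\cdot)\,dm$ $P$-a.s.\ equal to a constant, those constants form a Cauchy sequence in $\C$ and $\int u(\cdot)\,dm$ is $P$-a.s.\ constant. This shows that $\mathcal{K}_P$ is closed in $\mathcal{L}^\infty_P(\varOmega, C^{r-1}(M))$, so its Kolmogorov quotient $K_P$ is a closed subspace of the Banach space $L^\infty_P(\varOmega, C^{r-1}(M))$ and hence complete.

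The heart of the proof is the preservation claim. Since $\mathcal{L}_\epsilon$ is already a bounded operator on $L^\infty_P(\varOmega, C^{r-1}(M))$, it suffices to verify that a representative $u$ with $\int u(\omega)\,dm = c$ $P$-a.s.\ is sent to one with the same property. I would compute $\int \mathcal{L}_\epsilon u(\omega)\,dm = A\bigl(\tilde{\ell}_\theta \tilde{\mathcal{L}}_\epsilon u(\omega)\bigr)$ and push $A$ through $\tilde{\ell}_\theta$ using \eqref{eq:A4}, which gives $A\bigl(\tilde{\ell}_\theta \tilde{\mathcal{L}}_\epsilon u(\omega)\bigr) = \ell_\theta\bigl[A\,\tilde{\mathcal{L}}_\epsilon u(\cdot)\bigr](\omega)$. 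Next, the duality \eqref{eq:duality} with the test function $\varphi \equiv 1$ yields $A\bigl(\tilde{\mathcal{L}}_\epsilon u(\omega')\bigr) = \int L(\epsilon;\omega') u(\omega')\,dm = \int u(\omega')\,dm = c$ for $P$-a.e.\ $\omega'$, so the scalar argument of $\ell_\theta$ is the constant function $c\,1_\varOmega$. Finally $\ell_\theta 1_\varOmega = 1_\varOmega$ — which follows from \eqref{eq:A1} taken with $u = 1_\varOmega$ together with the $\theta$-invariance of $P$ — gives $\int \mathcal{L}_\epsilon u(\omega)\,dm = c$ $P$-a.s., so $\mathcal{L}_\epsilon u \in \mathcal{K}_P$ and $\mathcal{L}_\epsilon$ preserves $K_P$.

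I do not expect a serious obstacle here, which matches the remark that the statement is not difficult to prove: the preservation reduces to a short chain of the structural identities \eqref{eq:A4}, \eqref{eq:duality}, and $\ell_\theta 1_\varOmega = 1_\varOmega$. The only points demanding care are bookkeeping ones — confirming that the functional $A$ is exactly the integration appearing in the definition \eqref{eq:K}, that the property of being $P$-a.s.\ constant survives the $L^\infty$ limit in the completeness argument, and that passing to the Kolmogorov quotient is compatible with $\mathcal{L}_\epsilon$, which is automatic from its already-established boundedness on the ambient space.
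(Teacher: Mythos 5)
Your proof is correct, and its skeleton matches the paper's: measurability of $\omega \mapsto \int u(\omega)\,dm$, completeness of $K_P(\varOmega ,C^{r-1}(M))$ via the estimate $\vert \int u(\omega)\,dm-\int v(\omega)\,dm\vert \le \Vert u-v\Vert _{L^\infty}$, and preservation handled in two stages, first $\tilde{\mathcal L}_\epsilon$, then $\tilde \ell _\theta$. The $\tilde{\mathcal L}_\epsilon$ stage is identical in both (apply \eqref{eq:duality} with $\varphi =1_M$ and use $1_M\circ f_\epsilon (\omega)=1_M$). The genuine difference is the mechanism for the $\tilde \ell _\theta$ stage. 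You invoke \eqref{eq:A4} with the bounded functional $A=\int \cdot \,dm$, obtaining the pointwise $P$-a.s.\ identity $\int \tilde \ell _\theta v(\omega)\,dm=\ell _\theta \left[\int v(\cdot)\,dm\right](\omega)$, which reduces everything to the scalar fact $\ell _\theta 1_\varOmega =1_\varOmega$ (a consequence of \eqref{eq:A1} and the $\theta$-invariance of $P$). The paper does not use \eqref{eq:A4} here at all: it tests $\int \tilde \ell _\theta u(\cdot)\,dm$ against an arbitrary $\varphi \in L^1_P(\varOmega)$ and identifies it weakly with the constant $\bar I(u)$ via \eqref{eq:A1}, \eqref{eq:A2} and two applications of Fubini's theorem, concluding by $L^1$--$L^\infty$ duality. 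What each buys: your argument is shorter and pointwise, avoiding the joint-measurability and Fubini bookkeeping, and it uses \eqref{eq:A4} exactly as that hypothesis was designed to be used; the paper's argument is more economical in hypotheses, since \eqref{eq:A2} is just the special case of \eqref{eq:A4} for the evaluation functionals $g\mapsto g(x)$, so the proposition is established there from \eqref{eq:A1} and \eqref{eq:A2} alone, with the full strength of \eqref{eq:A4} reserved for the correlation estimates at the end of the proof of Theorem \ref{thm:main}. The same contrast appears in the measurability step (you compose the Bochner measurable $u$ with $A$; the paper uses joint measurability of $(\omega ,x)\mapsto u(\omega ,x)$ plus Fubini), and both routes deliver the identity \eqref{eq:d1} as a byproduct, which the paper needs later.
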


\begin{proof}
It is first shown that $K_P(\varOmega ,C^{r-1}(M))$ is a Banach space.  For each $u \in L^\infty_P(\varOmega ,C^{r-1}(M))$,  $I(u;\cdot):\varOmega \to \C$ is defined as $I(u;\omega)=\int u(\omega)dm$. As discussed above \eqref{eq:globalduality},  $\varOmega \times M\ni (\omega ,x)\mapsto u(\omega ,x)$ is measurable, and $I(u;\cdot):\varOmega \to \C$ is measurable by Fubini's theorem.
If $u\in K_P(\varOmega ,C^{r-1}(M))$, then $I(u;\cdot)$ is $P$-almost surely constant. The constant is denoted by $\bar I(u)$. 
 Since the space $L^\infty_P(\varOmega ,C^{r-1}(M))$ is complete, a Cauchy sequence $\{ u_n\}_{n\geq 1} \subset K_P(\varOmega ,C^{r-1}(M))$ has a limit $\bar u$  of $\{u_n\}$ in $L^\infty_P(\varOmega ,C^{r-1}(M))$ with respect to the norm $\Vert \cdot \Vert _{L^\infty}$. 
Hence, it suffices to show that $I(\bar u;\cdot) $ is $P$-almost surely constant. 
We define $\Gamma=\cup _{n\geq 1}\Gamma_n$ with zero measure sets 
$
\Gamma_n=\{ \omega : I(u_n;\omega) \neq \bar I(u_n)\}.
$
Then, it is easily seen that $P(\Gamma)=0$, and $I(u_n;\omega)=\bar I(u_n)$ for all $\omega \in \varOmega \backslash \Gamma$ and $n\geq 1$. We also note that for each $u, v\in L^\infty_P(\varOmega ,C^{r-1}(M))$, $\vert I(u;\omega)-I(v;\omega)\vert \leq \Vert u(\omega) -v(\omega)\Vert _{C^{r-1}(M)}\leq \Vert u-v\Vert _{L^\infty}$ for
$P$-almost every $\omega \in \varOmega$. Thus,
$I(u_n;\cdot)$ $P$-almost surely converges to $I(\bar u;\cdot)$, and $\bar I(u_n)$ converges to a number $\bar I$, and therefore $I(\bar u;\omega)=\bar I$ for $P$-almost every $\omega$ in the full measure set $\varOmega \backslash \Gamma$.

Next, it is shown that $\mathcal L_\epsilon$ preserves $K_P(\varOmega,C^{r-1}(M))$. By \eqref{eq:duality},  
for each $u\in L^\infty_P(\varOmega,C^{r-1}(M))$, 
\begin{align}
\label{eq:preinv}
I(\tilde{\mathcal L} _\epsilon u;\omega) 
=\int L({\epsilon};\omega) u(\omega) dm 
= \int  u(\omega) \cdot 1_M \circ f_{\epsilon}(\omega) dm ,\quad  \omega \in \varOmega,
\end{align}
which coincides with $I(u;\omega)$
since $
1_M\circ f=1_M
$ for any mapping $f :M\rightarrow M$ on $M$. That is, $\tilde{\mathcal L} _\epsilon u \in K_P(\varOmega ,C^{r-1}(M))$ for each $u\in K_P(\varOmega ,C^{r-1}(M))$.

If $u\in L^\infty_P(\varOmega,C^{r-1}(M))$, then $\varOmega \times M\ni (\omega ,x)\mapsto u(\omega ,x)$ is measurable as discussed above. 
Hence, it follows from \eqref{eq:A1}, \eqref{eq:A2} and Fubini's theorem that for each $\varphi \in L^1_P(\varOmega)$
\begin{multline*}
\int \varphi (\omega) \cdot I( \tilde \ell _\theta u;\omega)dP=\int \varphi \cdot \tilde \ell _\theta  u dmdP \\
=\int \left( \int \varphi (\omega ) \cdot \ell _\theta \left[ u(\cdot ,x)  \right] (\omega) dP \right) dm
= \int \varphi (\theta \omega ) \cdot u(\omega ,x)   dP dm, 
\end{multline*}
which, again by Fubini's theorem, coincides with $ \int \varphi (\theta \omega ) \cdot I(u;\omega )   dP$. Specifying $u\in K_P(\varOmega ,C^{r-1}(M))$, it is written as $\int \varphi (\theta \omega) \cdot I(u;\omega)dP=\int \varphi (\omega) \cdot \bar I(u)dP$ since $P$ is an invariant measure. Thus, $\tilde \ell _\theta u$ is also in $K_P(\varOmega ,C^{r-1}(M))$, and 
\begin{equation}\label{eq:d1}
\bar I(\tilde \ell _\theta u)=\bar I(u).
\end{equation}
It immediately follows from this demonstration and \eqref{eq:preinv} that $\mathcal L_\epsilon u=\tilde \ell _\theta \tilde{\mathcal L}_\epsilon u$ is in $K_P(\varOmega ,C^{r-1}(M))$, and the conclusion is obtained.
\end{proof}

The spectrum of $\mathcal L _\epsilon$ is now analyzed by showing that the operator $\mathcal L_\epsilon$ closely matches the operator $L(f_0)$. However, $\mathcal L_\epsilon$ and $L(f_0)$ are not directly relatable because the two operators act on different spaces. 
To obtain a meaningful comparison, the transfer operator of the skew-product mapping $\varTheta _0:(\omega ,x)\mapsto (\theta \omega ,f_0(x))$ is considered.  A bounded operator $\mathcal L_0:L^\infty_P (\varOmega ,C^{r-1}(M)) \to L^\infty_P (\varOmega ,C^{r-1}(M))$ is defined as $\mathcal L_0=\tilde \ell _\theta \tilde{\mathcal L}_0$, where
\[
\tilde{\mathcal L}_0 u (\omega ) =L(f_0)u (\omega), \quad \omega \in \varOmega
\]
for each $u \in L^\infty_P(\varOmega ,C^{r-1}(M))$.
Hereafter, $L(f_0)$ is expressed in the simplified form$L_0$. 
Note that Proposition \ref{BYineq}, \eqref{eq:globalduality}, \eqref{eq:F1} and \eqref{eq:d1} with $\varTheta _\epsilon$, $\tilde{\mathcal L}_\epsilon$ and $\mathcal L_\epsilon$ replaced by $\varTheta _0$, $\tilde{\mathcal L}_0$ and $\mathcal L_0$ hold by the arguments used to develop the proof of Proposition \ref{BYineq} and the respective equations.

The following proposition is essential for proving Theorem \ref{thm:main}.
Let $\sigma (A)$ be the spectrum of a bounded operator $A:E \to E$ on a Banach space $E$. 
In particular, 
the spectrum of  $\mathcal L_0 :K_P(\varOmega ,C^{r-1}(M)) \to K_P(\varOmega ,C^{r-1}(M))$ and $L_0:C^{r-1}(M)\to C^{r-1}(M)$ is denoted as $\sigma (\mathcal L_0)$ and $\sigma (L_0)$, respectively. 

\begin{proposition}\label{Prop6}
$\mathcal{L} _0$ is quasicompact on 
$
K_P(\varOmega,C^{r-1}(M))
$
 with spectral radius $1$, and its spectrum with absolute value $1$ consists only of a simple eigenvalue $1$. 
Moreover, 
\[
\sup \{ \vert z\vert  :  z\in \sigma (\mathcal{L} _0),\; z\neq 1\} \leq \sup \{ \vert z\vert  :  z\in \sigma (L_0),\; z\neq 1\} =:\bar{\tau }_0,
\]
which is strictly smaller than $1$.
\end{proposition}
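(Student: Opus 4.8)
The plan is to transport the known spectral picture of $L_0$ on $C^{r-1}(M)$ to $\mathcal L_0$ on $K_P(\varOmega,C^{r-1}(M))$ through the fibered structure $\mathcal L_0=\tilde\ell_\theta\tilde{\mathcal L}_0$ and the commutation relations \eqref{eq:A4}. I would start from the classical fact (see e.g.\ \cite{Ruelle}) that $L_0$ is quasicompact on $C^{r-1}(M)$ with a simple leading eigenvalue $1$; writing $\Pi$ for the associated spectral projection, duality \eqref{eq:duality} with $\varphi\equiv1$ forces $\Pi v=\left(\int v\,dm\right)\rho_0$, and $R:=L_0-\Pi$ satisfies $\Pi R=R\Pi=0$, $\Pi^2=\Pi$ and has spectral radius $\bar\tau_0<1$. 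Let $\tilde\Pi$ and $\tilde R$ denote the operators on $L^\infty_P(\varOmega,C^{r-1}(M))$ obtained by applying $\Pi$ and $R$ in each fiber, so that $\tilde{\mathcal L}_0=\tilde\Pi+\tilde R$ with $\tilde\Pi\tilde R=\tilde R\tilde\Pi=0$ and $\tilde\Pi^2=\tilde\Pi$. Since the fiber map of $\varTheta_0$ is $f_0$ independently of $\omega$, the version of \eqref{eq:F1} for $\mathcal L_0$ reads $\mathcal L_0^n u(\omega)=\tilde\ell_\theta^n[L_0^n u(\cdot)](\omega)$, which by \eqref{eq:A4} gives the clean factorization
\begin{equation*}
\mathcal L_0^n=\tilde{\mathcal L}_0^n\,\tilde\ell_\theta^n=(\tilde\Pi+\tilde R^n)\,\tilde\ell_\theta^n=\tilde\Pi\tilde\ell_\theta^n+\tilde R^n\tilde\ell_\theta^n,\qquad n\geq1,
\end{equation*}
where $\tilde{\mathcal L}_0^n=(\tilde\Pi+\tilde R)^n=\tilde\Pi+\tilde R^n$ follows from $\tilde\Pi\tilde R=\tilde R\tilde\Pi=0$ and $\tilde\Pi^2=\tilde\Pi$.

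Next I would exploit the defining property of $K_P(\varOmega,C^{r-1}(M))$. For $u$ in this space let $\bar I(u)$ be the $P$-a.s.\ constant value of $\omega\mapsto\int u(\omega)\,dm$ as in the proof of Proposition \ref{BYineq}. Because $\int\rho_0\,dm=1$ and $\bar I(\tilde\ell_\theta u)=\bar I(u)$ by \eqref{eq:d1}, iterating \eqref{eq:d1} yields
\begin{equation*}
\tilde\Pi\tilde\ell_\theta^n u=\bar I(\tilde\ell_\theta^n u)\,h_0=\bar I(u)\,h_0=:\Pi_\infty u,
\end{equation*}
where $h_0\in K_P(\varOmega,C^{r-1}(M))$ is the constant function $h_0(\omega)=\rho_0$. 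Crucially $\Pi_\infty$ does not depend on $n$ and has one-dimensional range $\C h_0$; a direct check gives $\Pi_\infty^2=\Pi_\infty$, $\mathcal L_0 h_0=h_0$, and $\mathcal L_0\Pi_\infty=\Pi_\infty\mathcal L_0=\Pi_\infty$. Setting $N:=\mathcal L_0-\Pi_\infty$ one then has $\Pi_\infty N=N\Pi_\infty=0$, whence $\mathcal L_0^n=\Pi_\infty+N^n$; comparison with the factorization above identifies $N^n=\tilde R^n\tilde\ell_\theta^n$ on $K_P(\varOmega,C^{r-1}(M))$.

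It remains to estimate $N^n$. Applying $R$ fiberwise is non-increasing for the $C^{r-1}$ operator norm, so $\Vert\tilde R^n\Vert\leq\Vert R^n\Vert_{C^{r-1}}$, while $\Vert\tilde\ell_\theta^n\Vert\leq1$ by \eqref{eq:A3}; hence $\Vert N^n\Vert\leq\Vert R^n\Vert_{C^{r-1}}$ and the spectral radius of $N$ on $\ker\Pi_\infty$ is at most $\lim_n\Vert R^n\Vert_{C^{r-1}}^{1/n}=\bar\tau_0<1$. Since $\Pi_\infty$ is a bounded projection commuting with $\mathcal L_0$, the splitting $K_P(\varOmega,C^{r-1}(M))=\C h_0\oplus\ker\Pi_\infty$ is $\mathcal L_0$-invariant, with $\mathcal L_0$ acting as the identity on $\C h_0$ and as $N$ (of spectral radius $\leq\bar\tau_0$) on $\ker\Pi_\infty$. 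This yields quasicompactness with spectral radius $1$, shows that the only spectral value of modulus $1$ is the eigenvalue $1$, which is simple because its spectral projection $\Pi_\infty$ has rank one, and gives $\sup\{|z|:z\in\sigma(\mathcal L_0),\,z\neq1\}\leq\bar\tau_0$.

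I expect the genuine content to lie in the collapse $\tilde\Pi\tilde\ell_\theta^n=\Pi_\infty$ on $K_P(\varOmega,C^{r-1}(M))$. On the ambient space $L^\infty_P(\varOmega,C^{r-1}(M))$ the peripheral part $\tilde\Pi\tilde\ell_\theta^n$ has the infinite-dimensional range $\{g\,h_0:g\in L^\infty_P(\varOmega)\}$ and is not compact, so no spectral gap survives there; passing to $K_P$, where $\int u(\omega)\,dm$ is forced to be $P$-a.s.\ constant, is exactly what makes this part rank one. Verifying this collapse rigorously—hence that $\Pi_\infty$ is the true spectral projection of $\mathcal L_0$ at $1$—is the main obstacle, and it rests squarely on \eqref{eq:d1} and the definition \eqref{eq:K}.
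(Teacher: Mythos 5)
Your proof is correct and follows essentially the same route as the paper: your $\Pi_\infty u=\bar I(u)\rho_0$ and $N=\mathcal L_0-\Pi_\infty$ coincide with the paper's projection $\Pi_0$ and remainder $\mathcal R=\mathcal L_0\Pi_1$ on $K_P(\varOmega,C^{r-1}(M))$, and both arguments reduce the gap to the known quasicompactness of $L_0$ via \eqref{eq:d1}, the commutation \eqref{eq:A4}/\eqref{eq:F1}, and the contraction \eqref{eq:A3}. The only cosmetic difference is that the paper builds $\rho_0$ and its projection by Ces\`aro averages of $L_0$ rather than citing the spectral decomposition directly, and writes the remainder as $\tilde\ell_\theta^n\tilde R^n$ instead of your (equal, by commutation) $\tilde R^n\tilde\ell_\theta^n$.
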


\begin{proof}
For each $u\in C^{r-1}(M)$, a function $\tilde{\pi} _0 u $ on $M$ is defined as
\begin{align}\label{form}
\tilde{\pi }_0 u= \lim _{n\rightarrow \infty} \frac{1}{n} \sum ^{n}_{k=1}L_0 ^ku . 
\end{align}
From \eqref{eq:weakLY} and the form of $\tilde{\pi }_0$ in (\ref{form}), it follows that $\tilde{\pi }_0$ is a bounded operator on $C^{r-1}(M)$. Thus,
\begin{equation}\label{eq:fixedptofellzero}
L_0 \tilde{ \pi }_0 u=\lim _{n\to \infty} \frac{1}{n} \sum ^n_{k=1} L_0^{k+1}u = \lim _{n\to \infty} \frac{1}{n} \left( \sum _{k=1}^n L^k_0u +L^{n+1}_0u-L_0u\right) = \tilde{ \pi } _0u.
\end{equation}
This equation states that $\tilde{ \pi }_0$ is the projection into the eigenspace of $L_0$ belonging to the  eigenvalue $1$.

Let $\rho _0:=\tilde{\pi} _01_M$. Then $\rho _0\neq 0$. Indeed, by \eqref{eq:duality}, 
\begin{equation}\label{proj}
\int \rho _0dm =\lim _{n\to \infty} \frac{1}{n} \sum _{k=1}^n \int 1_M \cdot 1_M \circ f_0^kdm=1.
\end{equation}
As is well known,
$1$ is the simple eigenvalue of the transfer operator $L(f)$ on $C^{r-1}(M)$ 
 for each $C^r$ expanding map  $f:M\to M$ 
 (see  \cite[Section 2]{BKS}). It therefore 
follows from \eqref{eq:fixedptofellzero} that $\rho _0$ is the unique eigenfunction of $L_0$ up to a constant belonging to the eigenvalue $1$. 

Given $u\in  L^\infty_P(\varOmega,C^{r-1}(M))$, a measurable mapping $\Pi _0 u:\varOmega \to C^{r-1}(M)$ is defined as
\[
\Pi _0u (\omega) =\rho _0 \int u(\omega)dm ,\quad \omega \in \varOmega.
\]
(The measurability of $\Pi _0u$ follows from the proof of Proposition \ref{Prop6}.) 
It follows from $\Vert \Pi _0u\Vert _{L^\infty} \leq \Vert \rho _0 \Vert _{C^{r-1}} \Vert u\Vert _{L^\infty}$ that $\Pi _0$ is a bounded operator on $L^\infty_P(\varOmega,C^{r-1}(M))$.
Moreover, 
$\Pi _0$ is the projection into the eigenspace of $\mathcal{L} _0$ restricted on the Banach space $K_P(\varOmega,C^{r-1}(M))$ belonging to the eigenvalue $1$: \eqref{proj} yields $\Pi _0\Pi _0=\Pi _0$, and it follows from \eqref{eq:fixedptofellzero} that for each $u\in K_P(\varOmega,C^{r-1}(M))$ and $P$-almost every $\omega \in \varOmega$, 
\begin{equation}
\label{eq:b3}
\mathcal L_0\Pi _0u(\omega)=\tilde \ell _\theta [L_0 \rho _0 \cdot I( u,\cdot)](\omega)=\rho _0\bar I(u)=\Pi _0u(\omega),
\end{equation}
where the notations $I(u,\cdot)$ and $\bar I(u)$ adopted in the proof of Proposition \ref{BYineq} are used. Another projector is now defined as
$\Pi _1:=\mathbf{Id} -\Pi _0$, and decompose $\mathcal{L} _0$ into $\mathcal{K} =\mathcal{L} _0\Pi _0$ and $\mathcal{R} =\mathcal{L} _0\Pi _1$. Since 
$\Pi _0K_P(\varOmega,C^{r-1}(M))\cong \mathbb{C} \rho _0$,
$\mathcal{K}$ is a compact operator on $K_P(\varOmega,C^{r-1}(M))$. 
Furthermore, by virtue of \eqref{eq:duality}, \eqref{eq:d1}, and \eqref{eq:b3}, 
\[
\mathcal L_0 \Pi _0 u(\omega) =\rho _0\bar I(\tilde \ell _\theta u)=\rho _0 \int \tilde \ell _\theta u(\omega) dm=\rho _0\int L_0\tilde \ell _\theta u(\omega) dm, \quad \text{$P$-a.s.}
\]
for each $u \in K_P(\varOmega ,C^{r-1}( M))$ is obtained. Thus, by \eqref{eq:A2}, 
$
\mathcal L_0\Pi _j=\Pi _j\mathcal L_0
$ is obtained, where $j=0,1$. In particular, we get for each $n\geq 1$, 
\begin{equation}\label{eq:commute}
\mathcal R ^n =\mathcal L_0^n\Pi _1.
\end{equation}

Similarly, let us define bounded operators $\pi _0, \pi _1$ on $C^{r-1}(M)$ as
\[
\pi _0 u =\rho _0\int  udm  , \quad \pi _1u=u-\pi _0u, \quad u \in C^{r-1}(M).
\]
Then, it is straightforward to check that $\pi _0, \pi _1$ are projections, and that $\pi _0C^{r-1}(M)$ is the one-dimensional eigenspace of $L_0$ belonging to the eigenvalue $1$. In other words, $\pi _0$ coincides with $\tilde{\pi}_0$. Now, $L_0$ is decomposed into 
a compact operator $
K=L_0\pi _0$ and a bounded operator $R=L_0\pi _1
$.
By the approach used to demonstrate \eqref{eq:commute}, it can be observed that $L_0$ preserves $\pi_1C^{r-1}(M)$. We recall that the transfer operator $L(f):C^{r-1}(M)\to C^{r-1}(M)$ of a $C^r$ expanding map $f:M\to M$ is quasi-compact with spectral radius $1$, and its spectrum with absolute value $1$ solely consists of the simple eigenvalue $1$ (see \cite[Section 2]{BKS}). Therefore, $\tau _0<1$, and  
 there exists a constant $C>0$ such that for any $u \in C^{r-1}(M)$ and $n\geq 1$,
\begin{equation*}
\Vert L_0^n \pi _1u \Vert _{C^{r-1}(M)}  \leq C \bar{\tau }_0^n\Vert u \Vert _{C^{r-1}(M)}.
\end{equation*}
It follows from \eqref{eq:A3}, \eqref{eq:F1}, and \eqref{eq:commute} 
that for any $u \in K_P(\varOmega,C^{r-1}(M))$, $n\geq 1$, and $P$-almost every $\omega \in \varOmega$,
\begin{align*}
\Vert \mathcal{R} ^nu (\omega)\Vert _{C^{r-1}} =\Vert \tilde \ell _\theta ^n( L_0 ^n[\Pi _1u (\cdot)])(\omega) \Vert  _{C^{r-1}}
& \leq  \Vert L^n_0 \pi_1[u(\omega)]\Vert _{C^{r-1}} \\
 &\leq C\bar{\tau } ^n _0 \Vert u (\omega)\Vert _{C^{r-1}},
\end{align*}
i.e., the spectral radius of $\mathcal R$ is bounded by $\bar \tau_0$.
The conclusion follows from a straightforward check that the spectral radius of $\mathcal{L} _0$ is $1$. 
\end{proof}

Now, the Baladi-Young perturbation lemmas can be applied 
to families of linear operators. The relevant lemmas are  
 Lemmas $1$, $2$, $3$, and the comment below Lemma $1$ in \cite{BY}. Let 
 $X=K_P(\varOmega,C^{r-1}(M))$, $T_0=\mathcal{L} _0$, $T_{\epsilon} =\mathcal{L} _{\epsilon}$, $X_0=\Pi _0 K_P(\varOmega,C^{r-1}(M))$, $X_1=\Pi _1 K_P(\varOmega,C^{r-1}(M))$, $\kappa _0=1$, $\kappa _1=\max \{ \bar \tau _0, \Lambda_r(f_0)\}$. $\kappa $ is arbitrarily close to (and slightly bigger than) $\kappa _1$, 
and $\Pi _0$ and $\Pi _1$ are the projections given in the proof of Proposition \ref{Prop6}. 
Indeed, it is straightforward to verify that hypotheses (A.1) and (A.3) in the lemmas are satisfied by Proposition \ref{BYineq} and \ref{Prop6}, and that hypothesis (A.2) follows from \cite[Lemma A.1]{BKS}. 
From the Baladi-Young perturbation lemmas, it follows that there exists a family of decompositions $K_P(\varOmega,C^{r-1}(M))=X_0^{\epsilon} \oplus X_1^{\epsilon}$, $\epsilon >0$, in which the projections $\Pi _0 ^{\epsilon}:X^{\epsilon}_0\oplus X^{\epsilon}_1\rightarrow X^{\epsilon}_0$ satisfy 
\begin{equation}\label{stc.stb}
\Vert \Pi _0 -\Pi _0 ^{\epsilon} \Vert _{L^\infty} \rightarrow 0 \quad \mathrm{as} \; \epsilon \rightarrow 0,
\end{equation}
and
\begin{equation*}
\sigma (\mathcal{L}_{\epsilon} \vert _{X_0^{\epsilon}}) \rightarrow \sigma (\mathcal{L}_0\vert _{X_0})\quad \mathrm{as} \; \epsilon \rightarrow 0,
\end{equation*}
in terms of the Hausdorff distance and using the notation $\bar{ \tau}_{\epsilon} = \sup \{ \vert z\vert : z \in \sigma (\mathcal{L} _{\epsilon} \vert _{X^{\epsilon}_1}) \}$, 
we have
\begin{equation}\label{UpperBound2}
\lim _{\epsilon \rightarrow \infty} \bar{\tau }_{\epsilon} \leq \kappa _1.
\end{equation}

Let $\bar{\lambda  }_{\epsilon} \in \sigma (\mathcal{L} _{\epsilon}\vert _{X^{\epsilon} _0})$ be the simple eigenvalue that converges to $1$, and let $\rho _{\epsilon}:=\Pi _0^{\epsilon}1_{\varOmega \times M}$. 
It will now be shown that $\bar{\lambda }_{\epsilon}=1$  for any sufficiently small $\epsilon >0$. 
For $P$-almost every $\omega \in \varOmega$, 
\begin{align*}
\int  \rho _{\epsilon} (\omega)dm
=   \int  \rho _0dm- \int  (\rho _0-\rho _\epsilon (\omega ))dm 
\geq \int  \rho _0dm-  \Vert  \rho _{\epsilon}-\rho _0 \Vert _{L^\infty} is obtained.
\end{align*}
From \eqref{proj} and \eqref{stc.stb}, it follows that $ \int  \rho _{\epsilon} (\omega) dm>0$ for any sufficiently small $\epsilon >0$ and $P$-almost every  $\omega \in \varOmega$. 

On the other hand, $\bar \lambda _\epsilon \rho _\epsilon (\omega ,x)=\tilde \ell _\theta \tilde{\mathcal L}_\epsilon \rho _\epsilon (\omega ,x)$ for each $x\in M$ and $P$-almost every $\omega \in \varOmega $. Therefore, by \eqref{eq:duality} and \eqref{eq:d1}, $P$-almost surely we have 
\begin{align*}
\int  \rho _{\epsilon}(\omega)dm = \bar{\lambda  }_{\epsilon}  ^{-1}\bar  I(\tilde \ell _\theta \tilde{\mathcal L}_\epsilon\rho _\epsilon )=\bar{\lambda  }_{\epsilon}  ^{-1}\bar I( \tilde{\mathcal L}_\epsilon\rho _\epsilon) 
=\bar{\lambda  }_{\epsilon}  ^{-1}\int  \rho _{\epsilon}(\omega) \cdot 1_M\circ f_\epsilon (\omega)dm,
\end{align*}
which coincides with $\bar \lambda _\epsilon ^{-1}\int \rho_\epsilon(\omega)dm $.
This implies that $\bar{\lambda }_{\epsilon}=1$ for any sufficiently small $\epsilon >0$.

A measure $\mu ^\epsilon$ on $\varOmega \times M$ is defined as $\mu ^\epsilon (d\omega,dx) =\rho_\epsilon(\omega ,x) (P\times m)(d\omega ,dx)$.
By virtue of \eqref{eq:globalduality} and noting that $\bar{\lambda }_{\epsilon}=1$, $\mu ^\epsilon $ is invariant with respect to $\varTheta _\epsilon$.
Furthermore, it follows from Proposition \ref{Prop6} that $\mathcal L_\epsilon$ is quasi-compact on $K_P(\varOmega,C^{r-1}(M))$ with spectral radius $1$, and that its spectrum with absolute value $1$ solely consists of the simple eigenvalue $1$ for each small $\epsilon >0$.
This implies that 
when the essential spectral radius  of $\mathcal L_\epsilon$ is denoted by $\hat \kappa _\epsilon$, the following inequality holds for any $n\geq 1$ and $u\in K_P(\varOmega ,C^{r-1}(M))$
\[
\Vert \mathcal L_\epsilon ^n u\Vert _{L^\infty} \leq \Vert \Pi _0^\epsilon u\Vert _{L^\infty} +\mathcal O(\hat \kappa _\epsilon ^n) \Vert \Pi _1^\epsilon u\Vert _{L^\infty}.
\]
This inequality is bounded by $C\Vert u\Vert _{L^\infty}$, where the constant $C>0$ is independent of $u$ and $n$. 
Hence, we can define a bounded operator $\tilde{\Pi }^{\epsilon}_0 $ on $K_P(\varOmega ,C^{r-1}(M))$ of the form
\begin{align*}
\tilde{ \Pi } ^{\epsilon}_0 u=\lim _{n\rightarrow \infty}\frac{1}{n} \sum ^{n}_{k=1}\mathcal{L} _{\epsilon}^k u ,\quad u \in K_P(\varOmega,C^{r-1}(M)).
\end{align*}
As in the proof of Proposition \ref{Prop6}, it can be verified that $\tilde{\Pi }^\epsilon _0$ coincides with the eigenprojection $\Pi ^\epsilon _0:K_P(\varOmega,C^{r-1}(M))\rightarrow X^{\epsilon} _0$. Thus, $\mu ^\epsilon$ is a probability measure on $\varOmega \times M$ (in particular, the disintegration $\mu ^\epsilon _\omega$ of $\mu^\epsilon$ is $P$-almost surely a probability measure on $M$): by \eqref{eq:globalduality},
\[
\mu ^\epsilon (\varOmega \times M) =\lim _{n\to \infty} \sum ^n_{k=1}\int \mathcal L_\epsilon ^k 1_{\varOmega \times M}\cdot 1_{\varOmega \times M}dmdP=1.
\] 
Hence, recalling that $\pi_{\varOmega}^{-1}\Gamma =\Gamma \times M$ for each $\Gamma \in \mathcal F$, we have
\[
\mu^\epsilon(\pi _{\varOmega}^{-1}\Gamma)=\int _{\Gamma }\mu _{\omega}^\epsilon(M)dP=P(\Gamma),
\]
which demonstrates that $\mu^\epsilon$ is a unique absolutely continuous invariant probability measure with the density function $\rho_\epsilon$ in $K_P(\varOmega,C^{r-1}(M))$. 
Furthermore, from \eqref{stc.stb},
$\rho _{\epsilon}  $  converges to the density function $\rho _0$ of the absolutely continuous ergodic invariant probability measure $\mu ^0$ of $f_0$ with respect to the norm $\Vert \cdot \Vert _{L^\infty}$. 

Since the eigenprojection $\Pi ^{\epsilon}_0 :K_P(P,E)\rightarrow X^{\epsilon} _0\cong \mathbb{C} \rho _{\epsilon}$ is unique, it can be easily confirmed that $\Pi ^\epsilon_0$ coincides with a bounded operator $\hat{\Pi }^\epsilon _0$ on $K_P(\varOmega ,C^{r-1}(M))$ given by
\begin{align*}
\hat{\Pi} ^{\epsilon}_0 u(\omega)=\rho_{\epsilon}(\omega)\int u(\omega)dm,\quad  \omega \in  \varOmega
\end{align*}
(which $P$-almost surely coincides with $\rho _{\epsilon}\bar I( u)$) for each $u \in K_P(\varOmega,C^{r-1}(M))$. From the argument used to prove Proposition \ref{Prop6}, it can also be verified 
that $\mathcal L_\epsilon$ preserves $\Pi ^\epsilon _1K_P(\varOmega ,C^{r-1}(M))=(\mathbf{Id} -\Pi ^\epsilon _0)K_P(\varOmega ,C^{r-1}(M))$.
On the other hand, by  \eqref{eq:A2}, \eqref{eq:A4}, \eqref{eq:duality}, and \eqref{eq:F1}, 
 for each $\varphi \in L^1_m(M)$ and $u\in C^{r-1}(M)$, we have a standard rewriting of the backward correlations: 
\begin{align*}
\ell _\theta^n C_{\varphi ,u}(\omega, n) &=\int \varphi \cdot \tilde \ell _\theta ^n[L_n(\epsilon;\cdot) u](\omega)dm-\int \varphi  \cdot \tilde \ell _\theta ^n[\mathcal L_\epsilon ^n\rho _\epsilon(\theta ^n \cdot)](\omega)  dm  \int udm  \\
&=\int \varphi \cdot \mathcal L^n_\epsilon u(\omega)dm-\int \varphi  \cdot \mathcal L_\epsilon ^n\rho _\epsilon(\omega)  \cdot \left( \int udm\right)  dm ,
\end{align*}
which concides with $\int \varphi \cdot \mathcal L_\epsilon^n \Pi _1^\epsilon u(\omega)dm$ for $P$-almost every $\omega \in \varOmega$ by $\hat \Pi^\epsilon _0=\Pi^\epsilon _0$. (In the second equality, we use relation $\tilde \ell _\theta [u(\theta \cdot)] =u(\cdot)$, which holds for each $u\in L^\infty_P(\varOmega ,C^{r-1}(M))$ because $\int \varphi \cdot \tilde \ell _\theta [u\circ \theta ] dP=\int (\varphi \cdot u)\circ \theta dP=\int \varphi \cdot udP$ for any $\varphi \in L^1_P(\varOmega)$.)
Thus, $\ell _\theta^n C_{\varphi ,u}(\omega, n) $ is bounded by $C\bar \tau _\epsilon ^n  \Vert \varphi \Vert _{L^1}\Vert u\Vert _{C^{r-1}}$ for $P$-almost every $\omega \in\varOmega$, where $C>0$ is a constant independent of $\omega$ and $n$.
Similarly, for each $\varphi \in L^1_{P \times m}(\varOmega \times M)$ and $u \in K_P(\varOmega,C^{r-1}(M))$, 
\begin{multline*}
\left\vert \int  C_{\varphi(\theta ^n\cdot) ,u(\cdot)}(\cdot, n)dP\right\vert = \left\vert \int \varphi \cdot \mathcal L_\epsilon^n udmdP-\int \varphi  \cdot \mathcal L_\epsilon ^n\rho _\epsilon \cdot \bar I(u) dmdP \right\vert \\
 = \left\vert \int \varphi \cdot \mathcal L_\epsilon^n \Pi _1^\epsilon udmdP \right\vert
\leq C\bar \tau _\epsilon ^n  \Vert \varphi \Vert _{L^1}\Vert u\Vert _{C^{r-1}}
\end{multline*}
where the constant $C>0$ is independent of $\omega$ and $n$. 
Moreover, 
it is straightforward to see that $\bar \tau _0$ and 
 $\bar \tau _\epsilon$ equal the rate $\tau _0$ of the exponential decay of correlations of $(f_0,\mu^0)$ and the rate $\tau _\epsilon$ of exponential decay of integrated/backward fiber correlations of $(f_\epsilon ,\mu^\epsilon )$, respectively (see e.g.~ \cite[Remark 2.3]{Baladi}). Finally, $\lim _{\epsilon \to 0}\bar \tau _\epsilon \leq \kappa _1=\max \{  \tau _0 ,\Lambda _r(f_0)\}$ by \eqref{UpperBound2}, and we complete the proof of Theorem \ref{thm:main}.

\section*{Acknowledgments}
The author would like to express his deep gratitude to his supervisor, Professor Sigehiro Ushiki for his elaborated guidance and considerable encouragement.

\bibliographystyle{my-amsplain-nodash-abrv-lastnamefirst-nodot}
\bibliography{MATHabrv,APBKS.ref}

\end{document}